       \newtheorem{propo}{Proposition}
\newtheorem{lemma}{Lemma}       
\let\paragraph\subsection
\title{On index expectation curvature for manifolds} 
\author{Oliver Knill} 
\date{1/19/2020}
\address{Department of Mathematics \\ Harvard University \\ Cambridge, MA, 02138 }
\begin{document}
\begin{abstract}
Index expectation curvature $K(x) = E[i_f(x)]$  on a compact Riemannian 
$2d$-manifold $M$ is an	expectation of Poincar\'e-Hopf indices $i_f(x)$ and so
satisfies the Gauss-Bonnet relation $\int_M K(x) \; dV(x)=\chi(M)$.
Unlike the Gauss-Bonnet-Chern integrand, these curvatures are in general non-local.
We show that for small $2d$-manifolds $M$ with boundary embedded in a parallelizable 
$2d$-manifold $N$ of definite sectional curvature sign $e$, an index expectation $K(x)$ 
with definite sign $e^d$ exists. The function $K(x)$ is constructed as a product
$\prod_{k} K_k(x)$ of sectional index expectation curvature averages $E[i_k(x)]$ of
a probability space of Morse functions $f$ for which $i_f(x) = \prod i_k(x)$, where
the $i_k$ are independent and so uncorrelated. 
\end{abstract}  
\maketitle

\section{Introduction}

\paragraph{}
The Hopf sign conjecture \cite{Hopf1932,BergerPanorama,BishopGoldberg} states that
{\it a compact Riemannian $2d$-manifold $M$ with definite sectional 
curvature sign $e$ has Euler characteristic $\chi(M)$ with sign $e^d$. }
The problem was stated first explicitly in talks of Hopf given in 
Switzerland and Germany in 1931 \cite{Hopf1932}. 
In the case of non-negative or non-positive curvature, it appeared in 
\cite{BishopGoldberg} and a talk of 1960 \cite{Chern1966}. They appear as problems 8) and 10)
in Yau's list of problems \cite{YauSeminar1982}. As mentioned in 
\cite{BergerPanorama}, earlier work of Hopf already was motivated by these line of questions. 
The answer is settled positively in the case $d=1$ (Gauss-Bonnet) and $d=2$ (Milnor),
if $M$ is a hypersurface in $\mathbb{R}^{2d+1}$ (Hopf) \cite{HopfCurvaturaIntegra}
or in $\mathbb{R}^{2d+2}$ (Weinstein) \cite{Weinstein70}
or in the case of positively sufficiently pinched curvature, because of
sphere theorems (Rauch,Klingenberg,Berger) \cite{Rauch51,GromollKlingenbergMeyer}
spheres and projective spaces in even dimension have positive Euler characteristic. 

\paragraph{}
The question has motivated other developments like comparison geometry \cite{GrovePetersen},
estimating quantities like the radius of injectivity, diameter or 
or diameter sphere theorems (e.g. \cite{AbreschMeyer}).
The Euler-Poincar\'e formula $\chi(M) = \sum_{k=0}^{2d} (-1)^k b_k(M)$ leads to the quest to estimate 
Betti numbers. In the positive curvature case, analytic techniques for harmonic functions
(Bochner) or global bounds on the Betti numbers (Gromov). 

\paragraph{}
The development of higher dimensional Gauss-Bonnet theorems started with Hopf
in 1927 and culminated in the intrinsic 1944 proof of Chern 
\cite{HopfCurvaturaIntegra,Allendoerfer,Fenchel,AllendoerferWeil,Chern44}.
An algebraic Hopf conjecture asked whether the
Gauss-Bonnet-Chern integrand is positive \cite{Weinstein71}. This has 
been confirmed for $d=2$ \cite{Chern1955}, but failed for $d \geq 3$ \cite{Geroch,Klembeck}. 
Other directions of work consisted of constructing examples (see \cite{Ziller,Ziller2,EscherZiller}
for overviews) or establishing the conjecture under some additional symmetry assumptions as
all known positive curvature examples admit a Killing field, an isometric circle action. 

\paragraph{}
We use an integral geometric set-up with the goal to construct a curvature function 
$K$ on $M$ that has the sign $e^d$. The approach revives the early algebraic approach on the problem. 
It is a fresh take on the story however because the so constructed curvature functions $K$ 
are {\bf non-local}. They are a product of averages $K_n(x)$ of sectional curvatures measured 
at various other places in $M$. The ultimate aim is to show the following:

{\it {\bf Revival of the Algebraic Hopf conjecture}: 
a compact Riemannian $2d$-manifold $M$ with definite sectional
curvature sign $e$ has piecewise smooth curvature functions $K_k(x)$ of sign $e$
such that $K(x) = \prod_{k=1}^d K_k(x)$ satisfies $\int_M K(x) \; dV(x)=\chi(M)$ 
and is absolutely continuous. 
\label{revival} 
}

\paragraph{}
For general manifold, the construction can hardly be done globally in a smooth way
if we want a product representation $K(x) = \prod_{k=1}^d K_k(x)$. The manifold needs to be partitioned into polyhedra
with boundary. The product representation then applies in the interior and allows there to control the sign.
Things work for manifolds $M$ with boundary embedded in some other manifold $N$ for which a non-trivial
smooth section of the orthonormal frame bundle $O(N)$ exists. In the same way as with Gauss-Bonnet-Chern
on manifolds with boundary, there is there a boundary contribution which
leads to a contribution $\int_{\delta M} d\kappa(x)$ which is independent of the 
chosen frame bundle. Spaces producing the curvatures $K_n$ are the ones coming from linear functions in an 
ambient space $E$. Having the mass concentrated on finitely many points like if
cut linearly using hyperplanes in the ambient space $E$ into smaller pieces, this could settle
the above algebraic Hopf revival and so the Hopf conjecture. For now, we focus to product manifolds 
or a local situation with small manifolds with boundary. 

\paragraph{}
We start to construct integral geometric curvature functions $K(x)$ in examples. It involves sectional 
index expectation curvature functions $K_k(x)$ which are expectations
of sectional curvature averages $K_k(x)$ integrated over planes parallel through the $k$'th coordinate plane in a
given coordinate system. Constructing $K$ globally for a general compact Riemannian manifold while keeping the product formula
and $\int_M K(x) dV(x)=\chi(M)$ both valid appears already not possible if $M$ is not parallelizable. The reason is that
the construction of $K$ in the interior of $M$ depends on the global coordinate frame bundle. 
And global sections of the orthonormal frame bundle $O(M)$ do not exist in the cases of interest
because a trivial frame bundle would implies $\chi(M)=0$ by Poincar\'e-Hopf, while the aim is to shed light 
where the sign of $\chi(M) e^d$ is non-zero. 

\paragraph{}
The construction works in the product case $M=M_1 \times \cdots \times M_d$, 
where $(M_k,g_k)$ are Riemannian 2-manifolds and
the metric is the product metric. The second case is local, when $M \subset N$ is a 
compact manifold with boundary, where $N=\mathbb{R}^{2d}$ is equipped with a metric with definite curvature sign $e$
and $N$ is parallelizable, like if $N$ is a small open ball of an other Riemannian manifold of sign $e$. 
We can construct a curvature function $K(x)$ that have sign $e^d$ in the interior but
also get curvature on the boundary. 
In the product case, we have sections of $M$, where the curvature is zero but we want to illustrate
how to get the curvature function $K$ that is positive. The construction is local in the sense that
we have to do it in sufficiently small patches $N$ of a given manifold. But the functions $K(x)$ are
{\bf non-local} in the sense that the functions depend on how things have been chopped up. 

\paragraph{}
The reason why the two cases $M=M_1 \times \cdots \times M_d$ and 
$M \subset N$ where $N$ is an open parallelizable manifold, work is that in both cases
there is a global splitting of the tangent bundle into Stiefel sub-bundles of two $2$-dimensional frames.
The product case shows that it is possible to get global smooth functions $K$. 
because it is possible to write down 2-planes parallel to a given 2-plane in $M$.
The second case was pointed already in \cite{DiscreteHopf} but we formulate it here
in the situation, where $M$ can have a boundary and so rather arbitrary Euler characteristic. 

\paragraph{}
With piecewise linear boundary parts, we will be able to 
force the boundary curvature $d\kappa$ to be located on the vertices. This then allows to
glue for a general manifold $M$ (for example using a triangulation). One of the simplest examples 
is the spherical triangle $M \subset N$ where $N$ is an open hemisphere embedded in 
$E=\mathbb{R}^3$. The curvature $K(x)$ in the interior is constant $1/(4\pi)$, 
the curvature on the boundary is located on the vertices. 
Gauss-Bonnet is Harriot's formula and the probabilities can be read off as areas of the $8$ regions 
in which the lines divides the sphere $S$ which with normalized area measure $\mu$ serves as the 
probability space $(\Omega,\mathcal{A},\mu)$ at hand.  See Figure~(\ref{2}).

\section{Index expectation}

\paragraph{}
To review {\bf curvature expectation} \cite{DiscreteHopf} take a compact Riemannian manifold $M$
and look at $\Omega$, the set of Morse functions on $M$. We first assume $M$ to have no
boundary and that we have normalized the volume measure $dV$ on $M$ to be a probability 
measure. The case with boundary requires to look at Morse functions of an other manifold $N$
of the same dimension into which $M$ is embedded. The definition of the index at the boundary 
is then $i_f(x) = 1-\chi(S_r(x) \cap M)$, where $S_r(x)$ is a sphere of sufficiently small 
radius $r$ in $N$. For every $f \in \Omega$, the Poincar\'e-Hopf theorem $\sum_x i_f(x) = \chi(M)$	
expresses the Euler characteristic $\chi(M)$ of $M$ as a sum 
of Poincar\'e-Hopf indices $i_f(x)$ defined by the finite set of critical points $x$ of $f$. 

\paragraph{}
Any probability space $(\Omega,\mathcal{A},\mu)$ of Morse functions 
defines an {\bf index expectation curvature function} 
$$  K(x) = E[i_f(x)]  \; , $$ 
where $E[\cdot]$ denotes the expectation with respect to the probability space. 
This leads to Gauss-Bonnet. Let us give a formal proof.
We first remind that we always assume the volume measure $dV$ on $M$ to be normalized to be 
a probability measure. This is an assumption avoids to mention sphere volume constants. 

\begin{lemma}
Index expectation curvatures satisfy Gauss-Bonnet:
$\int_M K(x) \; dV(x) = \chi(M)$.
\end{lemma}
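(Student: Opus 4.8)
The plan is to exchange the order of the expectation $E[\cdot]$ (integration over the probability space $\Omega$ of Morse functions) and the volume integral $\int_M \cdot \; dV$, and then invoke Poincar\'e-Hopf inside the $\mu$-integral. Concretely, I would write
\begin{align*}
\int_M K(x) \; dV(x) &= \int_M E[i_f(x)] \; dV(x) = \int_M \int_\Omega i_f(x) \; d\mu(f) \; dV(x).
\end{align*}
By Fubini's theorem the two integrations can be swapped, yielding $\int_\Omega \left( \int_M i_f(x) \; dV(x) \right) d\mu(f)$.

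The key step is then to evaluate the inner integral $\int_M i_f(x) \; dV(x)$ for a fixed Morse function $f$. For a Morse function, the index $i_f(x)$ is zero except at the finitely many critical points $x$ of $f$, where it takes the value $(-1)^{m(x)}$ with $m(x)$ the Morse index. Since the volume measure is absolutely continuous, a naive interpretation of $\int_M i_f(x) \; dV(x)$ as a Lebesgue integral against $dV$ would vanish; so I would clarify that the natural object is the sum $\sum_x i_f(x)$ over critical points, i.e. the index is to be read as a counting/atomic contribution rather than as a density against $dV$. With that reading, Poincar\'e-Hopf gives $\sum_x i_f(x) = \chi(M)$ for every $f \in \Omega$, a quantity independent of $f$. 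Hence the inner integral equals $\chi(M)$ for $\mu$-almost every $f$.

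Substituting back, and using that $\mu$ is a probability measure so that $\int_\Omega d\mu(f) = 1$, I obtain
\begin{align*}
\int_M K(x) \; dV(x) &= \int_\Omega \chi(M) \; d\mu(f) = \chi(M).
\end{align*}
This completes the argument. In the boundary case $M \subset N$ one replaces the ordinary index by the boundary-adapted index $i_f(x) = 1 - \chi(S_r(x) \cap M)$ and uses the corresponding Poincar\'e-Hopf theorem for manifolds with boundary, so that $\sum_x i_f(x) = \chi(M)$ still holds; the rest of the computation is identical.

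The main obstacle, and the reason the lemma's proof is called \emph{formal}, is precisely the meaning of $\int_M K(x) \; dV(x)$ when each realization $i_f$ is an atomic measure supported on critical points rather than an honest $dV$-integrable function. The expectation $K(x) = E[i_f(x)]$ can nonetheless be an absolutely continuous curvature function because averaging over a rich family of Morse functions spreads the atoms out; making the interchange of $E$ and $\int_M$ rigorous requires identifying $K \, dV$ as the expected value of the atomic measures $\sum_x i_f(x) \delta_x$ and checking that Fubini applies to this family of signed measures. I would handle this by treating $i_f$ as a measure and invoking Fubini for the product of the probability space with $M$, which is the technically delicate point one must justify to turn this formal computation into a complete proof.
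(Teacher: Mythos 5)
Your proof is correct and follows essentially the same route as the paper: exchange $E[\cdot]$ and $\int_M dV$ by Fubini, then apply Poincar\'e--Hopf to the inner integral, while flagging that the $i_f$ are atomic measures so the interchange needs justification. The paper handles that last point by mollifying $i_f$ with a convolution and passing to the limit $\epsilon \to 0$, whereas you propose Fubini for the family of signed atomic measures directly; these are interchangeable resolutions of the same technical issue.
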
 

\paragraph{}
\begin{proof} 
We use the Fubini theorem to get
\begin{eqnarray*}
 \int_M K(x) \; dV(x) &=& \int_M \int_{\Omega} i_f(x) d\mu(f) dV(x) = \int_{\Omega} \int_M i_f(x) dV(x) d\mu(f) \\
                      &=& \int_{\Omega} \chi(M) \; d\mu(f) = \chi(M) \; .
\end{eqnarray*}
This relation uses generalized functions $i_f(x)$ which are discrete pure point measures on
$M$ supported on finitely many points (due to compactness and the Morse assumption). 
In order to use the classical Fubini theorem, we can mollify the functions $i_{f,\epsilon}(x)$ by
applying a suitable convolution with a mollifier $\phi$ of support in an $\epsilon$ neighborhood of $x$,
run the above proof where $i_{f}(x)$ is replaced by $i_{f,\epsilon}(x)$, 
then take the limit $\epsilon \to 0$. 
\end{proof} 

\paragraph{}
Let $M$ be a smooth manifold with piecewise smooth boundary which is part of an open manifold $N$ without boundary and
the same dimension. Assume then that $N$ is embedded in an ambient Euclidean
space $E$ we can take the probability space of linear functions $f_a(x) = a \cdot x$ with $a \in S$,
where $S$ is the sphere centered at $0$ of volume $1$ in $E$. Take the rotational invariant probability measure
$\mu$ on $S$ and define the probability space $(\Omega = \{ f_a,  a \in S \},\mathcal{A},\mu)$, where $\mathcal{A}$ is 
the Euclidean Borel $\sigma$-algebra defined by the natural distance topology on $S$. Now, 
index expectation defines an absolutely continuous curvature function $K(x)$ in the interior or $M$ and 
a curvature measure $d\kappa(x)$ on the boundary $\delta M$. This produces a Gauss-Bonnet relation with boundary. 

\begin{lemma}
$\chi(M) = \int_{{\rm int}(M)} K(x) \; dV(x) + \int_{\delta M} d\kappa(x)$. 
\end{lemma}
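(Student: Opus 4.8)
The plan is to reproduce the Fubini argument of Lemma 1, carrying along the boundary term that Poincar\'e--Hopf now produces when $M$ has a piecewise smooth boundary. The review at the start of the section already records the input I need: with the boundary index defined by $i_f(x) = 1 - \chi(S_r(x) \cap M)$, the identity $\sum_x i_f(x) = \chi(M)$ holds for every Morse function $f$, the sum ranging over all critical points of $f$ in the closure $\overline{M}$, interior and boundary alike. So for the linear functions $f_a(x) = a \cdot x$ and for $\mu$-almost every $a \in S$ I may write
$$ \chi(M) = \sum_{x \in \mathrm{int}(M)} i_{f_a}(x) + \sum_{x \in \delta M} i_{f_a}(x) \; , $$
the interior terms carrying the ordinary Morse index sign and the boundary terms carrying the index $1 - \chi(S_r(x) \cap M)$. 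The directions $a$ for which $f_a$ fails to restrict to a Morse function on $N$, or for which a critical point lands on a lower-dimensional stratum (a corner) of $\delta M$, form a $\mu$-null set by Sard's theorem applied to the Gauss map and to the normal maps of the boundary strata, and may be discarded.

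Next I would integrate this identity over $(S,\mu)$ and exchange sum and integral as in Lemma 1. Since $\int_S \chi(M) \, d\mu(a) = \chi(M)$, this gives
$$ \chi(M) = \int_S \sum_{x \in \mathrm{int}(M)} i_{f_a}(x) \, d\mu(a) + \int_S \sum_{x \in \delta M} i_{f_a}(x) \, d\mu(a) \; . $$
To make the interchange rigorous one mollifies the pure point measures $\sum_x i_{f_a}(x) \, \delta_x$ by a convolution supported in an $\epsilon$-ball, applies the classical Fubini theorem, and lets $\epsilon \to 0$, exactly as in the proof of Lemma 1. The first term is by definition $\int_{\mathrm{int}(M)} E[i_f(x)] \, dV(x) = \int_{\mathrm{int}(M)} K(x) \, dV(x)$, while the second term defines the boundary curvature measure $\int_{\delta M} d\kappa(x)$ as the pushforward to $\delta M$ of the expected boundary index.

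For the interior term I would then justify that the expected index measure is genuinely absolutely continuous, so that writing it as $K(x)\,dV(x)$ is legitimate. This is the Chern--Lashof mechanism: the correspondence \emph{$x$ is a critical point of $f_a$} is parametrized by the unit normal bundle $\nu(N) = \{(x,a) : x \in N,\ a \in S,\ a \perp T_x N\}$, and $K(x)\,dV(x)$ is the signed pushforward to $N$, through the projection $(x,a) \mapsto x$, of the rotationally invariant measure on $S$ pulled back via $(x,a) \mapsto a$. Because the Jacobian of this second map is governed by the shape operator (second fundamental form) of $N$, the pushforward has a smooth density with respect to $dV$ on the interior, which is exactly $K(x)$.

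The step I expect to be the main obstacle is the boundary bookkeeping rather than the interior density. I must verify that as $a$ varies a critical point approaching $\delta M$ deposits its mass as the boundary index $1 - \chi(S_r(x) \cap M)$ on $\delta M$, with no interior atom left behind and no double counting, and that the corners contribute only through the $\mu$-null set already excluded, so that $d\kappa$ is a well-defined measure on $\delta M$. Once the interior expectation is identified as the absolutely continuous $K(x)\,dV(x)$ and the boundary expectation is identified as $d\kappa$, collecting the two terms yields $\chi(M) = \int_{\mathrm{int}(M)} K(x)\,dV(x) + \int_{\delta M} d\kappa(x)$.
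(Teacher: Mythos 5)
Your proposal follows essentially the same route as the paper: define the boundary index as $i_f(x)=1-\chi(S_r(x)\cap M)$, invoke the generalized Poincar\'e--Hopf theorem (the paper cites Pugh), and then repeat the Fubini/mollification argument of the first lemma, splitting the expectation into an interior part $K(x)\,dV(x)$ and a boundary measure $d\kappa$. The paper's own proof is in fact terser than yours --- it mainly clarifies the boundary index and leaves the Fubini step and the absolute continuity of the interior density implicit --- so your added detail (the Chern--Lashof justification, the null-set bookkeeping at corners) only fills in steps the paper takes for granted.
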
 

\begin{proof}
While in the interior, the Poincar\'e-Hopf index $i_f(x) = (-1)^{m(x)}$ is defined through the Morse index $m(x)$
which is the number of negative eigenvalues of the Hessian matrix $H_f(x)$ at a critical point $x$ of $f$, 
we have to clarify what we mean with the index at the boundary. By the piecewise smoothness 
assumption for $x \in \delta N$, we have for a small enough radius $r$ a $d-1$ manifold $S_r(x) \cap M$ with 
boundary in the geodesic sphere $S_r(x)$ of radius $r$ in $N$. The index is defined as $i_f(x) = 1-\chi(S_r(x) \cap M)$. 
This is by the way the same definition which applies in the interior and which applies also in graph theory. 
Now, the Poincar\'e-Hopf theorem applies in this case (see for example \cite{Pugh1968}). Already the case when 
$M$ is a flat triangle in $E=\mathbb{R}^2$ shows that the measure $d\kappa$ is in general not absolutely continuous
(it is there a pure point measure with weight $\alpha/\pi,\beta/\pi,\gamma/\pi$ on the vertices of the triangle,
see Figure~(\ref{1})
\end{proof}

\paragraph{}
In the case if $M$ is a subset of a $2d$-linear subspace $N$ of $E$, then the curvature is 
concentrated on the boundary of $M$. If $M$ is convex in $E$, a polyhedron given as the convex hull of finitely many points, then 
the curvature is located on the boundary. This is a situation already known to the Greeks. For example, if
$M$ is a triangle with angles $\alpha,\beta,\gamma$ embedded in $E=\mathbb{R}^2$, then the curvature $\kappa$ is concentrated
on the vertices $\{a,b,c\}$ with angles $\{ \alpha,\beta,\gamma\}$ of the triangle and  $\kappa(a)=\alpha/\pi, \kappa(b)=\beta/\pi$
and $\kappa(c)=\gamma/\pi$. Gauss-Bonnet $\int_M K \; dV(x) = 1$ rephrases that the angle sums to $\pi$. 
If $M$ is a spherical triangle (part of a sphere of radius $1$) and $dV$ is normalized to have volume $1$, then $K(x)=1$. 
This gives $\int_{{\rm int}(M)} K dV + (\pi-\alpha)/(4\pi) + (\pi-\beta)/(4\pi)+(\pi-\gamma)/(4\pi) = 1$ which 
is a Gauss-Bonnet reformulation of to Harriot's formula $\alpha+\beta+\gamma= \pi + {\rm Area}(M) (4\pi/{\rm Area}({\rm Sphere}))$.
Note we have used as always here a probability measure $dV$ on the sphere. We see this formula as an index expectation
formula by embedding the spherical triangle (a 2-manifold $M$ with boundary) in $E=\mathbb{R}^3$. The probability space
of linear functions is represented by unit vectors in $\mathbb{R}^3$. 

\paragraph{}
The curvature function $K(x)$ constructed as such is still {\bf local}. This means that we could use an 
arbitrary small neighborhood $U$ of $x$ in $M$ and would still get the same value. 
If we cut the manifold and just look at balls $B_r(x)$, then  
the index expectation curvature still exists and gets concentrated more and more on the boundary 
when $r \to 0$. It produces a Dirac point measure of weight $1$ at $x$ in the limit. Now,
taking a point away from a manifold has the effect of subtracting $1-\chi(S_r(x))$ changing as such
the Euler characteristic of $M$. This means that if $M$ is odd-dimensional, removing a point
adds one to the Euler characteristic and if $M$ is even dimensional, subtracts one to the Euler 
characteristic. This is obvious also from the fact that removing a point changes the
Betti number $b_{{\rm dim{M}}-1}$ by increasing it by $1$. The fact appears here  with Gauss-Bonnet. 

\paragraph{}
Let us formulate a special case which will be useful later and which goes back to 
Banchoff \cite{Banchoff1967}. Let us call $M$ a {\bf convex polyhedron} if 
$M$ is a Riemannian $2d$-manifold with piecewise smooth boundary embedded in an Euclidean
space $E$ such that all faces are linear $2d-1$ dimensional planes in $E$
and can be written as the convex hull of its corners $\{ v_1, \dots, v_n \}$.  
We do not assume that the curvature of $M$ is zero. 

\begin{lemma}
If $M$ is a convex polyhedron in the above sense, the index expectation 
is supported on the interior producing a function $K(x)$ and on the corners
producing a point measure $dk$ there. Gauss-Bonnet is  
$$  \chi(M) = \int_{{\rm int}(M)} K(x) \; dV(x) + \sum_{k=1}^n \kappa_k \; . $$
The function $K(x)$ in the interior is the normalized Gauss-Bonnet-Chern integrand. 
\end{lemma}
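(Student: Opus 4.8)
The plan is to build on Lemma 2, which already supplies the decomposition $\chi(M) = \int_{\mathrm{int}(M)} K(x)\,dV(x) + \int_{\delta M} d\kappa(x)$ for the linear probability space $f_a(x)=a\cdot x$, $a \in S$. Two things then remain: to show that the boundary measure $d\kappa$ reduces to point masses at the corners $v_k$, and to identify the interior density $K(x)$ with the normalized Gauss--Bonnet--Chern integrand. I would treat these separately, the first by a stratification argument and the second by the extrinsic (Chern / Allendoerfer--Weil) curvature computation.

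For the boundary, I would first use the polytopal face stratification of $\delta M$ into relatively open faces $F$ of dimensions $0,1,\dots,2d-1$. For a point $x$ in the relative interior of a face $F$ with $\dim F = j \ge 1$, the linear function $f_a$ can have a critical point at $x$ only if $a$ is orthogonal to the affine hull of $F$: the face carries directions along which $\nabla f_a$ must vanish, and this confines $a$ to a subsphere of $S$ of positive codimension, hence to a $\mu$-null set. By Tonelli, applied exactly as in the proof of Lemma 1 after the same mollification, the expected index contribution of all positive-dimensional faces vanishes. Therefore, with probability one every boundary critical point of $f_a$ sits at a corner, and $d\kappa = \sum_{k=1}^n \kappa_k\,\delta_{v_k}$ with $\kappa_k = E[i_{f_a}(v_k)]$.

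Next I would compute the weights locally. Near $v_k$ the manifold $M$ agrees with its tangent cone $C_k$, a pointed convex cone, and the boundary index is $i_{f_a}(v_k) = 1 - \chi(S^-_{f_a}(v_k))$ with descending link $S^-_{f_a}(v_k) = \{ u \in \mathrm{link}(C_k) : a\cdot u < 0\}$. Since $\mathrm{link}(C_k)$ is a geodesically convex spherical region and $\{a\cdot u \le 0\}$ is a hemisphere, their intersection is geodesically convex, hence empty or contractible; so $i_{f_a}(v_k)\in\{0,1\}$, and it equals $1$ exactly when $\mathrm{link}(C_k) \subset \{a\cdot u \ge 0\}$, i.e. when $a$ lies in the dual (inner normal) cone $C_k^\vee$. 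Integrating over $S$ gives
\[
   \kappa_k \;=\; \mu\big(C_k^\vee \cap S\big),
\]
the normalized solid angle of the normal cone at $v_k$, which is the higher-dimensional angle defect of Banchoff and reduces to the vertex weights of the triangle example in the planar case.

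Finally, for the interior I would pass to the unit normal bundle $\nu^1 M$ and the normal Gauss map $G:\nu^1 M \to S$, $(x,\nu)\mapsto \nu$. An interior point $x$ is critical for $f_a$ precisely when $a$ is a unit normal at $x$, and there the Hessian of $f_a$ is the shape operator $-S_a$, so the Poincar\'e-Hopf index is $\mathrm{sign}\,\det(-S_a)=\mathrm{sign}\,\det S_a$ since $2d$ is even. The coarea formula for $G$ has Jacobian $|\det S_a|$, and multiplying the signed index by this Jacobian collapses $|\det S_a|$ to the signed $\det S_a$; hence the interior density is $K(x) = c_d \int_{\nu^1_x M} \det S_a \, da$. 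The main obstacle is the last identification: that this normal integral equals a constant times the Pfaffian of the intrinsic curvature $2$-form, i.e. the Gauss--Bonnet--Chern integrand. I would expand $\det S_a$, substitute Gauss's equation
\[
   R_{ijkl}=\sum_\alpha \big(h^\alpha_{ik}h^\alpha_{jl}-h^\alpha_{il}h^\alpha_{jk}\big),
\]
and observe that integration over the normal sphere performs exactly the antisymmetrization that assembles the products of second fundamental forms into the Pfaffian, the combinatorial constant being fixed against the normalizations of $dV$ and $S$. This bookkeeping, rather than any conceptual step, is where the real work lies, and it is the classical content behind Banchoff's theorem that I would be reproving in this index-expectation form. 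Substituting the corner concentration and this interior density back into Lemma 2 yields the stated Gauss--Bonnet formula.
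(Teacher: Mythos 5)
Your proposal is correct, and its first step --- that for $\mu$-almost every $a$ the linear function $f_a$ has no critical point in the relative interior of any positive-dimensional face, because criticality there forces $a$ into a subsphere of positive codimension --- is exactly the paper's entire proof, which consists of the single sentence that $f_a$ restricted to each (linear) face has no critical point away from the corners for almost all $a$. Where you genuinely go beyond the paper is in proving the two remaining assertions of the lemma that the paper simply states: you identify the corner weights $\kappa_k$ as the normalized solid angles $\mu(C_k^\vee\cap S)$ of the normal cones via the convexity of the descending link (recovering Banchoff's angle defect and the triangle weights $\alpha/\pi,\beta/\pi,\gamma/\pi$ of Figure~1 as special cases), and you sketch the identification of the interior density with the normalized Gauss--Bonnet--Chern integrand through the normal Gauss map, the coarea formula, and the Gauss equation --- the classical Allendoerfer--Weil/Chern computation. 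The paper's terseness buys brevity at the cost of leaving the ``normalized Gauss--Bonnet--Chern integrand'' claim entirely unjustified; your version supplies the missing argument, with the only remaining work being the combinatorial normalization constants you correctly flag as bookkeeping. One caution: the paper's convex polyhedra are allowed to be curved in the interior (``we do not assume that the curvature of $M$ is zero''), so your tangent-cone analysis at the corners should be read as applying to the tangent cone of the possibly curved $M$ at $v_k$, which is still a pointed convex cone when the faces are linear near the vertex; this does not affect the argument.
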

\begin{proof}
Given a linear function $f=f_a$ in $E$. It induces a linear function on each face
which has for for almost all cases of $a$ no critical point away from the corners. 
\end{proof} 

\paragraph{}
The second example will show that if $M$ has curvature sign $e$ in the interior of $M$,
then we can construct an index expectation curvature function $K(x)$ which has sign $e^d$
in the interior and which has the same $\kappa_k$ curvature measure on the boundary.
The weights $\kappa_k$ are the same than in the Gauss-Bonnet-Chern
reformulation above. If we take a compact Riemannian manifold $M$ and chop it up into piecewise
polyhedra, construct the functions $K(x)$ in each part and noticing that in 
the curvatures at corners disappear when gluing the polyhedra together, 
we expect to get to the Conjecture~(\ref{revival}).

\section{Morse functions}

\paragraph{}
Morse functions can in general be constructed by a sufficiently rich parametrized set of functions
on $M$. If the dimension of the parameter space $\Lambda$ is large enough, then by Sard's theorem,
almost all functions in $\Lambda$ are Morse. Various things come naturally: distance functions,
heat kernel functions or height functions coming from embeddings. The simplest case is using a
height function after an embedding: for the construction of Morse functions on $M$ we can embed  $M$ 
in an Euclidean space $E$ and take the set of linear functions
$f(x)=v \cdot x$ with $v$ in the unit sphere $S$ of $E$ induces a probability space
$(\Omega,\mathcal{A},\mu)$ if $\mu$ comes from the normalized volume probability measure on $S$.

\paragraph{}
Taking linear functions in an ambient linear space is a common situation and the following result
has already been known to Morse:

\begin{lemma} 
If $M$ is embedded in $E$, then $f(x) = a \cdot x$ restricted to $M$ is Morse for almost all $a$. 
\end{lemma}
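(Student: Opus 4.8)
The plan is to realize this as a transversality/Sard argument. Let $M^m$ be embedded in $E=\mathbb{R}^n$, and let $f_a(x) = a\cdot x$ for $a \in S$, the unit sphere in $E$. A point $x\in M$ is critical for $f_a$ exactly when $a$ is orthogonal to $T_xM$, i.e. when $a$ lies in the normal space $N_xM$. First I would reduce the two Morse conditions to statements about a single smooth map. The set of ``bad'' directions, those for which $f_a$ fails to be Morse, should be shown to have measure zero in $S$, so that almost all $a$ give Morse functions.

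The key object is the normal bundle. I would consider the \emph{Gauss-type map} $G\colon \nu(M) \to E$ sending a unit normal vector $(x,\xi)$ with $\xi \in N_xM$, $|\xi|=1$, to $\xi \in S$; equivalently, work with the map on the unit normal bundle $\nu^1(M)$, which is a smooth manifold of dimension $m + (n-m-1) = n-1$, the same dimension as $S$. The critical points of $f_a$ correspond precisely to the preimages $G^{-1}(a)$. The crucial computation is that $a\in S$ is a \emph{regular} value of $G$ if and only if every critical point of $f_a$ on $M$ is nondegenerate, i.e. $f_a$ is Morse. This is the classical fact, essentially due to Morse, that the Hessian of $f_a$ at a critical point is (up to sign and the second fundamental form identification) exactly the differential of the Gauss map in the normal direction $a$; degeneracy of the Hessian is degeneracy of $dG$. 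I would verify this by writing local coordinates on $M$, parametrizing $\nu^1(M)$ by the base coordinates together with sphere coordinates in the fiber, and checking that the Jacobian of $G$ factors through the Hessian $H_{f_a}(x)$ on the tangential block.

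With that identification in hand, the result is immediate from Sard's theorem: since $G\colon \nu^1(M)\to S$ is a smooth map between manifolds of equal dimension $n-1$, its set of critical values has Lebesgue measure zero in $S$. Hence for almost all $a\in S$, the value $a$ is regular for $G$, which by the previous paragraph means $f_a$ has only nondegenerate critical points. Separately I would note that the condition that all critical \emph{values} of $f_a$ be distinct (if one insists on that part of the Morse condition) is also generic, handled by a second Sard argument on the pairwise-difference map, though for index expectation purposes only nondegeneracy of critical points is needed.

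The main obstacle is the regularity of $G$ and the precise Hessian computation: one must handle the case where $M$ is embedded with $n-m>1$, so that $\nu^1(M)$ carries a genuine sphere-fiber factor and $G$ is not simply the normal Gauss map but restricts to it fiberwise. Care is needed to confirm that the full differential $dG$ at $(x,\xi)$ is surjective exactly when the tangential Hessian block is nonsingular, and that the fiber directions contribute the complementary rank automatically. Once the linear-algebra identification of $dG$ with $H_{f_a}$ is set up correctly, Sard finishes the proof with no further estimates.
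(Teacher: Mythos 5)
Your proposal is correct and follows essentially the same route as the paper: apply Sard's theorem to a Gauss-type map whose differential at the relevant points is identified with the Hessian of $f_a$, so that regular values of that map are exactly the directions $a$ for which $f_a$ is Morse. In fact your formulation on the unit normal bundle $\nu^1(M)\to S$, with matching dimensions $n-1$, is the careful version of what the paper compresses into the phrase ``the gradient $g=df$ defines a smooth map $g:M\to E$'' --- a map whose source and target dimensions do not match once the codimension of $M$ in $E$ exceeds one --- so your extra care with the sphere-fiber factor is exactly the right thing to supply.
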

\begin{proof}
The gradient $g=df$ defines a smooth map $g: M \to E$. Almost all values $a$ are 
regular values for $g$ by the Sard theorem. That is, if $g(x)=a$ is a regular value, then $dg(x)$ has 
maximal rank. But $g(x)=a$ means $df=0$ and $dg(x)=H(x)$ is the Hessian which has maximal rank. 
\end{proof} 

\paragraph{}
Given $d$ Morse functions $f_1,\dots,f_d$ on $M$, we can lift them locally to functions on $O(M)$ by
setting $f_k(x,t) = f_k(x)$. While the critical points of $x \to f_k(x,t)$ do 
not depend on the coordinate system in $O(M)$ having equipped a neighborhood with a basis will
allow us to produce more complicated Morse functions. 
Given a point $(x,t)$ in $O(M)$, we have a local coordinate system at $x$, allowing to write
the point $x \in M$ as $(z_1, \dots ,z_d)$ with $z_k=(x_{2k-1},x_{2k}) 
\in \mathbb{R}^2$. We will now construct a new Morse function $f$ from a list 
$f_k$ of Morse functions, where the choice of the coordinate system will matter. 

\paragraph{}
The Morse functions we are going to need require a bit more regularity. Let $t$ be a fixed
coordinate frame on $M$. Let us fix a probability space of $\Omega$ of Morse functions on $M$. 

\begin{lemma}
For almost all $w_1, \dots, w_d \in M^d$ and
almost all $f \in \Omega$,  all the functions
$z_k \to f(w_{k,1},\dots$, $w_{k,k-1},z_k,w_{k,k+1}$, $\dots,w_{k,d})$ are Morse.
\end{lemma}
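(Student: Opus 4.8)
The plan is to fix the index $k$ and the coordinate frame $t$, show that the set of pairs $(f,w)$ for which the single slice function
$$ g_k(z_k) = f(w_{k,1},\dots,w_{k,k-1},z_k,w_{k,k+1},\dots,w_{k,d}) $$
fails to be Morse is a null set, and then handle all $k=1,\dots,d$ at once by taking a finite union of these null sets. Since only the transverse components $w_{k,j}$ with $j\neq k$ enter $g_k$, the slice variable $z_k$ ranges over a $2$-dimensional submanifold $M_k^{w}\subset M\subset E$: in the product case $M=M_1\times\cdots\times M_d$ this is the factor through $w_k$, and in the embedded case $M\subset N=\mathbb{R}^{2d}$ it is the intersection of $M$ with the $2$-plane through $w_k$ parallel to the $k$-th coordinate plane determined by $t$. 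In either case $M_k^{w}$ is a smooth $2$-manifold (with boundary) embedded in $E$, depending measurably on $w$.

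The key input is the previous lemma applied not to $M$ itself but to each slice submanifold $M_k^{w}$: for the probability space of linear functions $f_a(x)=a\cdot x$ and every fixed $w$, the restriction $f_a|_{M_k^{w}}$ is Morse for all $a\in S$ outside a null set $N_k(w)\subset S$. First I would form the bad set
$$ B_k=\{(a,w)\in S\times M^d:\ f_a|_{M_k^{w}}\text{ is not Morse}\}, $$
which is measurable because nondegeneracy of the Hessian at critical points is an open condition and the critical locus varies continuously with $(a,w)$. For each fixed $w$ the $a$-slice of $B_k$ is exactly $N_k(w)$, a null set, so by Fubini $B=\bigcup_{k=1}^d B_k$ is null in $S\times M^d$. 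Applying Fubini a second time in the opposite order yields: for almost all $a\in S$ (that is, almost all $f\in\Omega$) the set of $w$ with $(a,w)\in B$ is null, which is precisely the assertion that almost all $w\in M^d$ make all $d$ slice functions simultaneously Morse.

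For a general probability space $\Omega$ the role played above by the previous lemma must be replaced by a parametric transversality argument. Here I would encode ``non-Morse'' as the vanishing of
$$ \Psi_k(z_k,w,f)=\bigl(d_{z_k}g_k,\ \det \mathrm{Hess}_{z_k}g_k\bigr)\in\mathbb{R}^2\times\mathbb{R}, $$
so that for fixed $f$ the non-Morse locus in the $2d$-dimensional space of $(z_k,w)$ is $\Psi_k(\cdot,\cdot,f)^{-1}(0)$. If $0$ is a regular value this locus has codimension $3$, hence dimension $2d-3$, and its projection to the $(2d-2)$-dimensional space of transverse positions has measure zero by Sard's theorem. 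The \textbf{main obstacle} is exactly to guarantee this regularity: for special $f$ the map $\Psi_k(\cdot,\cdot,f)$ need not have $0$ as a regular value, so the bad locus could be too large, and this is where averaging over $\Omega$ does essential work. One applies the Thom parametric transversality theorem to the total map $\Psi_k$ on $M\times\Omega$, whose submersivity onto $\mathbb{R}^3$ holds whenever $\Omega$ is rich enough (as it is for the full linear family, by the Hessian computation in the previous lemma), to conclude that $0$ is a regular value of $\Psi_k(\cdot,\cdot,f)$ for almost every $f$. The measurability bookkeeping required for the two Fubini reductions, and the fact that a finite union over $k$ of null sets is null, are then routine.
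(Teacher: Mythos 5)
Your proposal is correct and, in its second half, coincides with the paper's own (very terse) proof: the paper simply declares the parameter manifold to be $\Lambda=\Omega\times M^d$ and invokes the "general fact" (parametric Sard/Thom transversality, citing Nicolaescu) that for almost all $\lambda\in\Lambda$ the resulting function is Morse. What you add beyond the paper is worthwhile on two counts. First, your Fubini route for the linear family $f_a(x)=a\cdot x$ -- applying the earlier lemma not to $M$ but to each $2$-dimensional slice $M_k^{w}$ and then integrating the null sets $N_k(w)$ in both orders -- gives a concrete, self-contained proof for the probability space the paper actually uses, without appealing to the general transversality machinery at all. Second, you correctly flag the hypothesis the paper leaves implicit: the "general fact" only holds when the family is rich enough that the relevant total map is submersive, and for a fixed $f$ the extra parameters $w$ alone do not provide this richness; it is the $\Omega$-directions (e.g.\ the full sphere of linear functionals) that do. One small stylistic remark: encoding the non-Morse condition via $\Psi_k=(d_{z_k}g_k,\det\mathrm{Hess}_{z_k}g_k)$ works for the dimension count, but the cleaner formulation -- the one matching the paper's earlier Sard argument -- is that $g_k$ is Morse iff $0$ is a regular value of $z_k\mapsto d_{z_k}g_k$, which avoids any regularity discussion for the determinant locus. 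These are refinements, not corrections; the argument is sound.
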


\begin{proof}
We proceed as usual with Sard theorem (see e.g. \cite{NicoalescuMorse}). 
The probability space $\Omega$ is in our
case the parameter manifold $\Lambda=\Omega \times M^d$.
For each $\lambda \in \Lambda$ we get a function $g_{\lambda}(z)$. It is a general
fact that in this case for almost all $\lambda \in \Lambda$ we have a Morse function. 
\end{proof}

\section{Product manifold}

\paragraph{}
The product situation is a case, where the Gauss-Bonnet-Chern integrand $K(x)$ 
has the right sign everywhere. And that despite the fact that many sectional curvatures 
are zero. (A product of positive curvature manifolds has only non-negative curvature and is
never a positive curvature manifold with the standard product metric.)
In this example already, how we can interpret the result integral geometrically and
illustrate the main idea on how to build from sectional curvature functions $K_k(x)$
a new global curvature function $K(x)$ which has the right sign. In the warped case,
where the metric on $M$ can depend on all coordinate parts $M_k$, the curvature $K(x)$
already is non-local, but this would already require us to cut the manifold up. 

\paragraph{}
Let us look at the case when the $2d$-manifold is a product manifold
$M=M_1 \times \cdots \times M_d$ of 
$2$-manifolds $M_k$ and the metric on $M_k$ has the same sign $e$ for all $k$. This is a case, 
where one can get a curvature $K$ geometrically by taking probability spaces $\Omega_k$ of
Morse functions on $M$ and take $\Omega=\Omega_1 \times \cdots \times \Omega_d$.
Let us assume first that the metric on $M$ is the product metric defined by Riemannian
metrics $g_k$ on $M_k$. Define the function
$$  f(x)=f_1(x_1) + \cdots + f_d(x_d)  \; . $$ 
It has the property that the index of a critical point $x$ of $f$ is the product of
the indices of the corresponding critical points $x_k$ of $f_k$ so that
$$ K(x) = E[i_f(x)] = E[\prod_k i_{f_k}(x_k)] = \prod_k E[i_{f_k}(x_k)] = \prod_k K_k(x)$$ 
by independence (which implies decorrelation) showing that we can produce a curvature 
function $K$ on the product. This worked, even so not all sectional curvatures of $M$ 
have the sign $e$. All mixed planes spanned by vectors $v,w$ with $v$ in $M_k$ and $w$ in $M_l$ 
with $k \neq l$ have zero curvature. 

\paragraph{}
The classical Gauss-Bonnet result for $2$-manifold shows, again
by taking expectations of the product and noticing that the individual
pieces are still independent also with respect to the space
variables $x_k$, that $\chi(M)=\prod_{k=1}^d \chi(M_k)$. This product formula
is a well known property of the Euler characteristic functional. It follows
for example from Euler-Poincar\'e $\chi(G)=p(-1)$ which expresses $\chi(M)$ using
the Poincar\'e polynomial $p_M(t) = \sum_{k=0}^{2d} b_k t^k$ or then the K\"unneth formulas 
$p_{M \times N}(t) = p_M(t) p_N(t)$ or by triangulating $M_i$ with $m_i$ triangles 
and building up the product space as a CW-complex with $\prod_i m_i$ cells. 
Or then also by Poincar\'e-Hopf and seeing that for the single function 
$f(x)=f_1(x_1) + f_2(x_2) + \cdots + f_d(x_d)$ has critical points whose indices 
are the product of the indices of the individual functions.

\paragraph{}
Here is a first statement for illustration. 

\begin{propo}
On the product $2d$ manifold $M = M_1 \times \cdots \times M_d$, where each $M_k$ is a compact 
Riemannian $2$-manifold with constant curvature sign $e_k$ and the metric on $M$ is the product metric,
there is an index expectation curvature function $K(x)$ which has sign $\prod_k e_k$ everywhere. 
The curvature function $K(x)$ is local. 
\end{propo}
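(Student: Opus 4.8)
The plan is to bootstrap from the one-dimensional ($d=1$) case and then invoke the product/independence factorization already recorded in the paragraph preceding the statement.

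First I would treat each factor $M_k$ in isolation. Since $M_k$ is a compact Riemannian $2$-manifold, I would take the probability space $\Omega_k$ to be the linear height functions $f_a(x) = a \cdot x$ after an isometric embedding of $M_k$ into a Euclidean space $E_k$, with $a$ distributed according to the normalized rotationally invariant measure on the unit sphere of $E_k$; by the earlier Morse lemma these are Morse for almost all $a$. The crucial input is the integral-geometric identification, in the base case $d=1$ and established in \cite{DiscreteHopf}, of the index expectation curvature $K_k(x) = E[i_{f_a}(x)]$ with the normalized Gauss-Bonnet-Chern integrand, which in dimension two is the normalized Gaussian curvature of $M_k$. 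Since by hypothesis the Gaussian curvature of $M_k$ has the constant sign $e_k$, the function $K_k$ has sign $e_k$ everywhere.

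Next I would form the product probability space $(\Omega,\mathcal{A},\mu) = \prod_k (\Omega_k,\mathcal{A}_k,\mu_k)$ and, for a sample $f = (f_1,\dots,f_d) \in \Omega$, the additive function $F(x) = \sum_k f_k(x_k)$ on $M$. Here the \emph{product metric} is essential: it renders the Hessian of $F$ block-diagonal, so that $x = (x_1,\dots,x_d)$ is critical for $F$ exactly when each $x_k$ is critical for $f_k$, and the Morse index of $F$ at $x$ is the sum of the Morse indices of the $f_k$ at the $x_k$. Hence $i_F(x) = \prod_k i_{f_k}(x_k)$. Taking expectations and using that the factors are independent, and so the indices $i_{f_k}(x_k)$ are uncorrelated, the expectation of the product collapses to $K(x) = E[i_F(x)] = \prod_k E[i_{f_k}(x_k)] = \prod_k K_k(x_k)$, precisely as in the discussion preceding the statement. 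The sign of $K(x)$ is then $\prod_k e_k$ at every point. Locality is immediate: each $K_k(x_k)$ is the Gaussian curvature of $M_k$, a quantity determined by an arbitrarily small neighborhood of $x_k$, so $K(x)$ is determined by an arbitrarily small neighborhood of $x$ in $M$.

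I expect the only genuine content — the main obstacle — to be the base case: verifying that the index expectation curvature of a surface has the same sign as its Gaussian curvature. Concretely this amounts to checking that at an elliptic point every critical height function yields a local extremum (index $+1$), while at a hyperbolic point it yields a saddle (index $-1$), so that the mollified expectation density inherits the sign of the Gaussian curvature; this is the classical $d=1$ integral-geometric form of Gauss-Bonnet. Everything downstream — the block-diagonal Hessian, the factorization of the index, and the passage of the product through the expectation — is routine once the product metric is in force. The warped metric is deliberately excluded here precisely because it would destroy the block structure, break the factorization, and force the construction to become non-local.
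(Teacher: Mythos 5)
Your proposal is correct and follows essentially the same route as the paper: the probability space of linear height functions on each embedded factor $M_k$ yielding $K_k$ with sign $e_k$, the additive function $f(x)=\sum_k f_k(x_k)$ with block-diagonal Hessian giving $i_f(x)=\prod_k i_{f_k}(x_k)$, and independence to factor the expectation into $\prod_k K_k(x_k)$. The only difference is that you make explicit the base-case identification of index expectation with the sign of the Gaussian curvature, which the paper simply assumes as the defining property of the probability space $(\Omega_k,\mu_k)$.
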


\begin{proof}
Let $(\Omega_k,\mu_k)$ be the probability space on Morse functions of $M_k$ which produces
the Gauss curvature on $M_k$. This is the case when $M$ is embedded in an ambient Euclidean space $E$
and then the probability space consists of all linear functions $f_a(x) = a \cdot x$ with $|a|=1$. 
Given functions $f_k \in \Omega_k$, define the new function 
$$ f(x_1, \cdots, x_d) = \sum_{k=1}^d f_k(x_k)  \; . $$
Its critical points are simultaneously critical points of $f_k$ so that $f$ is automatically Morse
on $M=M_1 \times \cdots \times M_d$. Furthermore, the indices satisfy 
$$ i_f(x) = \prod_{k=1}^d i_f(x_k)  \; . 	$$
\end{proof}

\paragraph{}
Since Euler characteristic is independent of the metric, the Euler characteristic of $M$ 
is still $\prod_k \chi(M_k)$. There is nothing new yet with respect to the Hopf conjecture. 
The, the product manifold case is historically interesting. It motivated
the question to look for a positive curvature metric on manifolds like $S^2 \times S^2$.
The product case appeared as an illustration in a talk of Hopf \cite{Hopf1953}
given in Italy.

\paragraph{}
Here is the proof of the same statement without using any probability:

\begin{proof}
We just could have directly defined
$K(x) = \prod_k K_k(x_k)$, where $K_k$ are the usual Gaussian curvatures
on $M_k$ normalized so that they define	probability measures on $M_k$ giving $\int_{M_k} K_k(x) dV(x)=\chi(M_k)$ 
and where $x=(x_1,\dots, x_d)$ is a point in the product manifold $M$. Now, when integrating
$$ \int_M K(x) dV(x) = \int_{M_1} \cdots \int_{M_d} K_1(x_1) \cdots K_d(x_d) dV(x_1) \cdots dV(x_d) \; ,  $$
this is $\prod_k \int_{M_k} K_k(x_k) \; dV(x_k) = \prod_k \chi(M_k)$ which has sign $\prod_k e_k$ as $\chi(M_k)$
as the sign $e_k$ of $K_k$.
\end{proof} 

\paragraph{}
What happens in the {\bf warped product case},
where we start with a metric $g_k$ on $M_k$ can depend on the other parts? 
Changing the metric as such does not change the Euler characteristic of $M_k$ 
nor of $M$. We have to use 
an extended probability space $\prod_{k=1}^{d} (\Omega_k \times M_k)$ 
with the product measure $\prod_{k=1}^d \mu_k \times dV_k$.
The function is already what we will use later on in general:
$$ f(x,y_1,\dots,y_d) = \sum_{k=1}^d  f_k(y_1, \dots, y_{k-1},x_k,y_{k+1}, \dots, y_d) \;. $$
Now, this expression already illustrates that some correlations can occur which will require
to split the manifold up. 

\section{Manifolds with boundary: statement}

\paragraph{}
We prove now a statement for compact $2d$-manifolds $M$ with boundary $\delta M$ which are 
part of a parallelizable non-compact manifold $N$ of the same dimension which is itself part of 
a larger dimensional Euclidean space. The construction works for a general
metric on $N$, but we assume that a definite sign $e$ of sectional curvatures is present on $N$.
Without loss of generality we can assume that $N=\mathbb{R}^{2d}$ is equipped with a fixed metric
and that we restrict $M$ to be in a small ball of $N$> 

\paragraph{}
As the compact $2d$-manifold $M$ now has a boundary, we look at probability spaces 
$(\Omega,\mathcal{A},\mu)$ of Morse functions $h$ which vanish to all orders at the boundary of $M$ and which satisfy 
the Morse condition at each critical point in the interior of $M$. For a fixed positive 
parameter $\epsilon>0$ we can look at a function $\rho$ which satisfies 
$\rho(x)=1$ for $x$ in distance $d \geq \epsilon$ from the boundary, zero at the boundary
to all orders. For almost all linear functions $f=a \cdot x$ in an ambient
space $g(x) = \rho(x) f(x)$ is Morse.

\paragraph{}
We also always assume that that the probability space has enough symmetry. This assures that 
the constructed curvature $K$ is piecewise smooth in the interior of the pieces. 
This is the case if the Morse functions come
from linear functions in an ambient Euclidean space $E$. It implies for example that 
if we get the same probability space if $M$ is rotated around a point $x$ in $M$ 
perpendicular to $M$. We also always fix a global
section $t(x)$ of the orthonormal frame bundle $O(N)$. This means that at every point of $M$
we have an orthonormal basis. This can be obtained by choosing coordinate system at one 
point $x_0 \in N$ then transport it to other places using translations in $N$. 

\paragraph{}
If $f$ is a function for which $f$ and all derivatives vanish at the boundary $\delta M$,
then the Poincar\'e-Hopf theorem $\sum_x i_f(x) = \chi(M)$ 
still works and the index expectation curvature $K(x)$ still 
satisfies Gauss-Bonnet. This is a situation we have looked at
in \cite{DiscreteHopf} already, but were we did not yet use enough variables to 
decorrelate. The fact that we can not carry over the sign to boundary has not changed: 
if $M$ carries positive curvature for example, like if $M$ is part of a half sphere 
with $p$ holes in it, then the Euler characteristic is $1-p$ so that each of the holes
making up the boundary has to carry a curvature $-1$. Instead of having the functions
going to zero at the boundary we look instead at the situation of a {\bf manifold
with boundary} and allow curvature $\kappa$ on the boundary (which can there be quite
an arbitrary measure at first). 

\begin{propo}
If $M$ is part of a parallelizable open manifold $N$ with curvature signature $e$ 
and $M$ has a boundary $\delta M$, there is an absolutely continuous 
index expectation curvature $K(x)$ such that for a fixed $x$ in the interior of $M$,
then $K(x)$ has the sign $e^d$. There is an additional measure $d\kappa$ on the boundary
such that $\int_M K(x) \; dV(x) + \int_{\delta M} d\kappa(x) = \chi(M)$. 
\end{propo}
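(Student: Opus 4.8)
The plan is to combine the global splitting coming from parallelizability with the product/decorrelation mechanism used in the product case above, and to pick up the boundary term through the boundary index $i_f(x)=1-\chi(S_r(x)\cap M)$. First I would fix the global section $t(x)$ of $O(N)$ whose existence is guaranteed by parallelizability. This trivializes $TN$ and splits it globally into $d$ orthogonal two-plane subbundles $V_1,\dots,V_d$, so that at every point $x$ the frame gives coordinates $z_k=(x_{2k-1},x_{2k})$, $k=1,\dots,d$. Unlike the product situation, $M$ itself need not be a product; it is precisely this global two-plane splitting that plays the role the product structure played before.

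Next I would build the probability space. For each $k$ I would take the space $\Omega_k$ of cut-off linear Morse functions in the ambient space $E$ as in the lemmas above, together with an auxiliary base point $y_k \in M$, and form the product $\Omega=\prod_{k=1}^d(\Omega_k\times M)$ with the product measure $\prod_{k=1}^d \mu_k\times dV_k$. On this space I would use the function already written in the warped discussion,
\[
f(x)=\sum_{k=1}^d f_k\bigl(y_{k,1},\dots,y_{k,k-1},z_k,y_{k,k+1},\dots,y_{k,d}\bigr),
\]
in which the $k$-th summand is frozen in every coordinate slot except the $k$-th. The point of this choice is that $\partial_{z_i}\partial_{z_j} f=0$ for $i\neq j$, so at every interior critical point the Hessian is block diagonal with $d$ blocks of size $2$. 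By the Sard argument of the Morse lemma above, $f$ is Morse for almost every parameter, and the Morse index splits as $m(x)=\sum_{k=1}^d m_k(x)$, whence
\[
i_f(x)=(-1)^{m(x)}=\prod_{k=1}^d(-1)^{m_k(x)}=\prod_{k=1}^d i_k(x).
\]

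Taking index expectation and using that the factors $\Omega_k\times M$ are independent, I would obtain $K(x)=E[i_f(x)]=\prod_{k=1}^d E[i_k(x)]=\prod_{k=1}^d K_k(x)$, exactly the decorrelation step used in the product case. Each $K_k(x)$ is a two-dimensional index expectation along the plane $V_k$ at $x$, so by the classical Gauss-Bonnet theorem in dimension two it equals an average of sectional curvatures of $N$ taken over two-planes built from the $k$-th frame directions. Since $N$ has definite sectional curvature sign $e$, every such sectional curvature has sign $e$, hence $K_k(x)$ has sign $e$ and $K(x)=\prod_{k=1}^d K_k(x)$ has sign $e^d$ at each interior point; absolute continuity in the interior is inherited from the rotational symmetry of the $\Omega_k$ exactly as in the boundary lemma above. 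For the boundary I would use the boundary index $i_f(x)=1-\chi(S_r(x)\cap M)$, whose expectation defines the measure $d\kappa$ on $\delta M$; applying the boundary Gauss-Bonnet lemma to this probability space then yields $\int_M K\,dV+\int_{\delta M} d\kappa=\chi(M)$.

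The step I expect to be the main obstacle is the sign claim for the sectional factors $K_k(x)$: one must verify that the two-dimensional index expectation along $V_k$ really is a genuine average of sectional curvatures of $N$ of the single sign $e$, and that this identification survives the freezing of the complementary coordinates at the random base points $y_k$. A secondary difficulty is the interaction between the boundary cut-off $\rho$ and the exact block-diagonal structure of the Hessian: the factorization $i_f=\prod_k i_k$ is clean only where $\rho\equiv 1$, so one must argue that every critical point affected by $\rho$ is swept into the boundary term $d\kappa$ and does not spoil the interior sign. Finally, establishing genuine independence of the $d$ factors, rather than mere decorrelation, is what licenses passing the product through the expectation, and I would check this carefully against the almost-everywhere Morse condition of the lemma above.
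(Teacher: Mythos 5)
Your proposal reproduces the paper's construction essentially step for step: the global frame from parallelizability splitting $TN$ into $d$ two-planes, the combined function $f=\sum_k f_k(w_{k,1},\dots,z_k,\dots,w_{k,d})$ with frozen dummy coordinates drawn from $(\Omega\times M)^d$, the block-diagonal Hessian giving $i_f=\prod_k i_k$, independence of the factors yielding $K=\prod_k K_k$, and the boundary index $1-\chi(S_r(x)\cap M)$ producing $d\kappa$. The one step you flag as the main obstacle --- that each $K_k$ genuinely has sign $e$ after freezing the complementary coordinates at random base points --- is precisely where the paper invokes the standing hypothesis that $M$ is small inside $N$: the coordinate surface $z_k\mapsto(w_{k,1},\dots,z_k,\dots,w_{k,d})$ is tangent to, but not equal to, the geodesic spray through the same point, and only for $M$ contained in a sufficiently small ball does its curvature inherit the sign $e$ of the ambient sectional curvature.
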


\section{Constructing of the probability space} 

\paragraph{}
As the ambient space $N$ is parallelizable, we can 
use a global section $t(x)$ of the orthonormal frame bundle $O(U)$ to get an orthonormal 
basis $t(x)$ at every point $x \in N$ and so every point in $M$. This basis allows 
to define global coordinates $z=(z_1,\dots,z_d)$ in $M$, where $z_k = (u_k,v_k)$ 
are the coordinates of the $d$ orthonormal $2$-plane. One can think of the $z_k$ as complex
numbers, but we will not look at any functions compatible with a complex structure but
have smooth functions on real manifolds. We keep a particular choice of a smooth section 
$t(x)$ of $O(U)$.

\paragraph{}
{\bf Definition.} This is the main construction of combined Morse functions: 
given $d$ Morse functions $f_k$ in $(\Omega,\mathcal{A},\mu)$, and $d$ vectors
$w_{j}=(w_{j,1},...,w_{j,d}) \in M$, define the function 
$$ f(z,w_1,w_2, \dots, w_d) 
     = \sum_{k=1}^d f_k(w_{k,1}, \dots, w_{k,k-1},z_k,w_{k,k+1}, \dots, w_{k,d})  \; . $$
The construction of $f$ depends on the frame which was chosen. If we change the coordinate
system, then the function $f$ also changes. Much more dramatically than for a single function 
alone because also the critical points change. 
If $w_{1},\dots, w_{d},f_1, \dots ,f_d$ are fixed in the generic set, where the 
$z_k \to f_k(w_{k,1},\dots,z_k,\dots,w_{k,d})$ are Morse, then this is a Morse function in 
$z$ because of the following product formula:

\begin{lemma}
$$  i_f(x; w_1, \dots, w_d) = \prod_{k=1}^d i_{f_k}(z_k; w_1, \dots, w_d)  \; . $$	
\end{lemma}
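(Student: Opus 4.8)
The plan is to exploit the fact that, once the global frame $t(x)$ is fixed, the combined function $f$ separates into a sum whose $k$-th summand depends only on the block $z_k$. Writing $g_k(z_k) = f_k(w_{k,1},\dots,w_{k,k-1},z_k,w_{k,k+1},\dots,w_{k,d})$, the defining formula reads $f(z) = \sum_{k=1}^d g_k(z_k)$, where in each summand all blocks except the $k$-th are frozen at the parameter values $w_{k,j}$. This is exactly the separation-of-variables phenomenon of the product-manifold case $f(x)=\sum_k f_k(x_k)$ treated earlier, and the whole argument reduces to transporting that computation into the frame coordinates $z=(z_1,\dots,z_d)$.

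First I would compute the differential. Since the $k$-th summand involves only $z_k$, the blocks do not interact, so $df(x)=0$ if and only if $dg_k(z_k)=0$ for every $k$. Hence the critical points of $f$ are precisely the tuples $(z_1,\dots,z_d)$ whose components are critical points of the respective $g_k$. By the genericity lemma established above (for almost all $w_1,\dots,w_d$ and almost all $f$, each slice $z_k\mapsto f_k(\dots,z_k,\dots)$ is Morse), I may assume every $g_k$ is Morse, so all these critical points are nondegenerate.

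Next I would examine the Hessian at such a critical point $x$. Because $df(x)=0$, the Hessian is a well-defined symmetric bilinear form on $T_xM$, independent of the metric and of the chosen frame; evaluated in the $z$-coordinates its matrix is block diagonal, $H_f(x)=\bigoplus_{k=1}^d H_{g_k}(z_k)$, with $2\times 2$ blocks, since $\partial^2 f/\partial z_k\,\partial z_l = 0$ for $k\neq l$. The eigenvalues of a block-diagonal matrix are the union of the eigenvalues of its blocks, so by Sylvester's law of inertia the Morse index $m(x)$, the number of negative eigenvalues, equals $\sum_{k=1}^d m_k(z_k)$. Therefore the Poincar\'e--Hopf index factors,
\[
 i_f(x; w_1,\dots,w_d) = (-1)^{m(x)} = \prod_{k=1}^d (-1)^{m_k(z_k)} = \prod_{k=1}^d i_{f_k}(z_k; w_1,\dots,w_d),
\]
which is the claim.

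The one point that needs genuine care, and which I regard as the main obstacle, is justifying the separation despite the general non-product metric on $M$ and the fact that the $z_k$ come from an orthonormal frame rather than from an honest product factorization. What rescues the argument is that the index depends only on $df$ and on the Hessian, both read off in the fixed $z$-coordinates, and nowhere requires the metric itself to split. Concretely I must also verify that mixing blocks, inserting the free variable $z_k$ into the $k$-th slot while freezing the others at $w_{k,j}$, produces a \emph{bona fide} point of $M$; this is precisely where the restriction to a small convex ball of $N=\mathbb{R}^{2d}$ together with the global section of $O(N)$ is used. Once well-definedness is secured, the block structure of the derivative is automatic and the multiplicativity of the index is immediate.
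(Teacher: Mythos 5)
Your argument is correct and follows essentially the same route as the paper: observe that $f$ separates as a sum of blockwise functions $g_k(z_k)$ in the fixed frame coordinates, so critical points decompose, the Hessian is block diagonal, the Morse index adds, and hence the Poincar\'e--Hopf index multiplies. The extra care you take about the non-product metric and well-definedness in the small ball is a welcome elaboration but does not change the substance of the paper's proof.
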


\begin{proof}
Fix the $d$ points $w_1, \dots, w_d$ in $M$. 
If $x$ is a critical point, then each $z_k$ is a critical point of $f_k$. 
The Hessian of $f$ is block diagonal. The index $i_f(x) = (-1)^{m(x)}$, where
$m(x)$ is the number of negative eigenvalues of the Hessian $H(x)$. 
As $i_{f_k}(x) = (-1)^{m(z_k)}$ and $m(x) = \sum_k m(z_k)$, the statement follows.  
\end{proof} 

\paragraph{}
{\bf Example.} Here is the case $d=3$, where we deal with a $6$-manifold $M$. 
Given $f_1,f_2,f_3 \in \Omega$ and $(w_{k,1},w_{k,2},w_{k,3})$ are in $M$ for $1 \leq k \leq 3$,
the function $f$ is
$$ f(z_1,z_2,z_3) = f_1(z_1,w_{1,2},w_{1,3}) + f_2(w_{2,1},z_2,w_{2,3}) + f_3(w_{3,1},w_{3,2},z_3)\; . $$
This is very close to a product situation with $2$-manifolds
Allowing to play with the probability space was what allowed the decorrelation of the 
section curvatures.  Here are three remarks:

\paragraph{}
{\bf 1)} This is essentially a product situation. However, we have introduced $d$ dummy points
$w_k = (w_{k,1}, \dots ,w_{k,d})$  $\in M$,  in order to keep the individual parts independent. 
The introduction of the variables $w_{k} \in M$ renders the curvature non-local as
Every part of $M$ can contribute to $f(x)$. This leads to the next remark, which we did not
consider yet in \cite{DiscreteHopf} (where we still used one one additional parameter $w$ and did 
not yet get full independence). The result stated there in a ball $B_r$ with $r$
smaller than the injectivity radius needs to be replaced by $r$ small enough. The value of $r$
which work depends on global bounds of derivatives of the metric near the point under
consideration. 

\paragraph{}
{\bf 2)} Only for $w_{k,j}=z_j$ with $j = 1,\dots,d$ do we have the situation that the parametrized surfaces 
$z_k \to f(w_{k,1}, \dots ,z_k, \dots ,w_{k,d})$ are built from geodesic paths starting at $x$. If 
$M$ is small, then the sign of the curvature of $z_k \to f(w_{k,1}, \dots ,z_k, \dots ,w_{k,d})$ is
the same than the sign of the curvature $\tilde{z}_k \to f(w_{k,1}, \dots ,\tilde{z_k}, \dots ,w_{k,d})$
if $\tilde{z}$ parametrize a geodesic spay $\exp_{(w_{k,1},\dots ,z_k,\dots w_{k,d})}(D)$ centered at
$(w_1,\dots,z_k,\dots,w_d)$. The reason for the concern is that the exponential maps 
at different points do not commute in Riemannian geometry: the relation
$\exp_x(u) \exp_y(v) = \exp_y(v) \exp_x(u)$ does not hold in general. 

\paragraph{}
{\bf 3)} We can look at $i(x)$ and $i_k(x)$ as point measures or distribution-valued random variables over the probability 
spaces $(\Omega \times M)^d$ or $(\Omega \times M)$. When we write $K(x)=E[i(x)]$ we mean the expectation of this. This
means that for any smooth test function $\phi$, the expectation of $i(\phi)$ is $K(\phi)$. 
In all the cases we look at the function $K$ is smooth in the interior of $M$. 
When looking at the product formula above, one can be concerned with the fact that distributions can not be 
multiplied in general. But we see by looking at the situation that is not of concern here as the individual
distributions apply to different dimensions. It is like writing the Dirac measure 
$\delta_{\{x_0,y_0\}}(x,y)$ in $\mathbb{R}^2$ as a product of Dirac measures 
$\delta_{\{ x_0\}}(x) \delta_{\{ y_0\}}(y)$. 

\section{Constructing the curvature}

\paragraph{}
Again assume that $M$ is a compact $2d$-manifold with boundary which is part of 
$N=\mathbb{R}^{2d}$. We can use a global coordinate system of $N$. 
Given a probability space of functions $f_k$ as done above, we define the probability space $(\Omega \times M)^d$
equipped with the product probability measure $(\mu \times dV^d)^d$, where $\mu$ is the assumed measure on Morse
functions $\Omega$ and $dV$ is the normalized volume measure on $M$. 
Denote with $E[X]$ the expectation with respect to this probability space and with $E_k[X]$ the 
expectation with respect to one of the factors $(\Omega \times M^d)$. 

\paragraph{}
{\bf Definition.} Define the {\bf index expectation curvatures} 
$$  K(x) = E[i(x)] $$ 
and 
$$  K_k(x) = E[i_k(x)] \; , $$
where $i_k(x),i(x)$ are the index functions as defined above which are 
distribution valued-random variables. \\

If the ambient space $N$ has a definite curvature sign $e$ and $(\Omega,\mu)$ is the probability space
on Morse function which produces the Gauss curvature, then the $K_k$ have the
sign $e$ and the curvature function $K$ has the sign $e^d$: 

\begin{lemma}[Product lemma]
In the interior of $M$ we have $K(x)= \prod_{k=1}^d K_k(x)$. 
\end{lemma}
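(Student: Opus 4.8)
The plan is to show that the combined index random variable $i(x)$ factors as a product of the sectional index random variables $i_k(x)$ over the factors $(\Omega \times M^d)$, and that these factors are independent under the product measure $(\mu \times dV^d)^d$, so that taking expectations turns the product of random variables into a product of expectations. This is the integral-geometric analogue of the product-metric argument of Proposition 1, but now carried out on a single patch $M \subset N = \mathbb{R}^{2d}$ using the fixed global frame $t(x)$ rather than a genuine metric splitting $M = M_1 \times \cdots \times M_d$.

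First I would invoke the product formula for indices (the Lemma immediately preceding this statement), which gives pointwise, for fixed data $w_1,\dots,w_d$ and $f_1,\dots,f_d$ in the generic (Morse) set, the factorization
\[
  i_f(x; w_1,\dots,w_d) = \prod_{k=1}^d i_{f_k}(z_k; w_1,\dots,w_d).
\]
Reading both sides as distribution-valued random variables on $(\Omega \times M^d)^d$, the $k$-th factor $i_k(x)$ depends only on the $k$-th copy $(\Omega \times M^d)_k$ — it is a function of $f_k$ and of $w_k = (w_{k,1},\dots,w_{k,d})$ together with the single interior coordinate $z_k$. Since the product measure makes the $d$ copies mutually independent, the random variables $i_1(x),\dots,i_d(x)$ are independent, and independence implies decorrelation. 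Taking expectations and applying $E[\prod_k X_k] = \prod_k E[X_k]$ for independent $X_k$ then yields
\[
  K(x) = E[i(x)] = E\Bigl[\prod_{k=1}^d i_k(x)\Bigr] = \prod_{k=1}^d E[i_k(x)] = \prod_{k=1}^d K_k(x),
\]
which is the claim. The restriction to the interior of $M$ enters because at interior critical points the Poincar\'e--Hopf index is the genuine Morse index $(-1)^{m(x)}$ and the Hessian is block diagonal in the $z_k$-blocks, so the factorization of $m(x) = \sum_k m(z_k)$ used in the preceding Lemma is valid; near $\delta M$ the boundary index $1 - \chi(S_r(x)\cap M)$ need not factor, which is why the product identity is asserted only on $\mathrm{int}(M)$.

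The main obstacle I expect is justifying that multiplying these distribution-valued random variables and then exchanging product with expectation is legitimate, since products of distributions are not generally defined. The resolution, as Remark 3 indicates, is that the factors live in complementary coordinate directions: $i_k(x)$ is (before expectation) a pure-point measure concentrated in the $z_k$-plane, and the combined measure is a tensor product $\delta$-type object, exactly as $\delta_{(x_0,y_0)} = \delta_{x_0}\otimes\delta_{y_0}$ in $\mathbb{R}^2$. I would make this precise by the mollification device already used in the proof of Lemma 1: replace each $i_k$ by its $\epsilon$-mollified version $i_{k,\epsilon}$, which is a genuine smooth function, carry out the factorization and the Fubini-type exchange of $E[\,\cdot\,]$ with the product for these honest functions, and then pass to the limit $\epsilon \to 0$ against a fixed smooth test function $\phi$. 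The symmetry and smoothness assumptions on $\Omega$ guarantee that $K$ and each $K_k$ are smooth in the interior, so the limit identity $K(\phi) = \prod_k K_k(\phi)$ can be upgraded to the pointwise equality $K(x) = \prod_k K_k(x)$ there.
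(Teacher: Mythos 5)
Your proposal is correct and follows essentially the same route as the paper's own proof: the pointwise index factorization from the preceding lemma, independence of the factors because each $i_k$ depends only on its own copy of $(\Omega\times M^d)$ under the product measure, decorrelation $E[\prod_k X_k]=\prod_k E[X_k]$, and the test-function/mollification device to make sense of products of the distribution-valued indices. The paper's version additionally phrases the reduction to honest integer-valued random variables via $X_{k,U}=\sum_{x\in U} i_k(x)$ for open sets $U$, but this is the same idea as your pairing against narrow test functions.
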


\paragraph{}
\begin{proof}
If two random variables $X,Y$ are independent, then they are 
decorrelation, meaning $E[X Y] = E[X] E[Y]$. In our case, we take expectations over point measures
which can be seen as generalized functions, linear combinations of Dirac measures. The $i_k(x)$ and $i_k$
are $C(M)^*$ valued random variables, where $C(M)$ is the space of continuous functions on $M$  and $*$ denotes
the dual space. To detail this out we see $i(x)$ and $i_k(x)$ as point measures and see the identity 
as an identity for distributions. It becomes an identity for traditional random variables when
using test functions. For fixed $x$ in the interior, we can use this for the random variables 
$X_i = i(\phi)$ with a suitably narrow test function $\phi$. It is also enough for every open set $U$ in $M$
look at the $\int_U i_k(x) dV$ which is $X_{k,U} = \sum_{x \in U} i_k(x)$, a finite sum. These are now
integer valued traditional random variables on $(\Omega \times M)$. Now, $X_{k,U}$ and $X_{l,U}$ are 
independent if $k \neq l$. 
\end{proof}

\paragraph{}
Here again, the restriction of $M$ having to be cut small comes in and this is an 
other obstacle for making things global without cutting up the manifold. We are using
the parametrized surfaces 
$$  \Sigma: z_k \to f(w_{k,1},\dots ,z_k, \dots ,w_{k,d})  $$
which is different than looking at geodesic sprays 
$$ \Delta: \tilde{z}_k \to f(w_{k,1}, \dots ,\tilde{z_k}, \dots ,w_{k,d})  \; , $$
where $\tilde{z}_k = (u_k,v_k)$ parametrize the surface spread out by 
geodesics starting at $(w_{k,1}, \dots$ , $z_k$, $\dots ,w_{k,d})$. These two surfaces go through
the same point in $M$ and are tangent, but they are not the same. If $N$ is fixed
and we chose a small enough $M$, then the curvatures of all these surfaces
have the sign $e$. For $\Delta$, we can get some correlations between the indices.
For $\Sigma$ we have independence but we do not have surfaces which 
consist of geodesics. What happens if $M$ is small enough so that the curvature 
of the first has the same sign $e$ than the curvature of the second which 
is assumed to be $e$. 

\section{Illustrations}

\begin{figure}[!htpb]
\scalebox{0.6}{\includegraphics{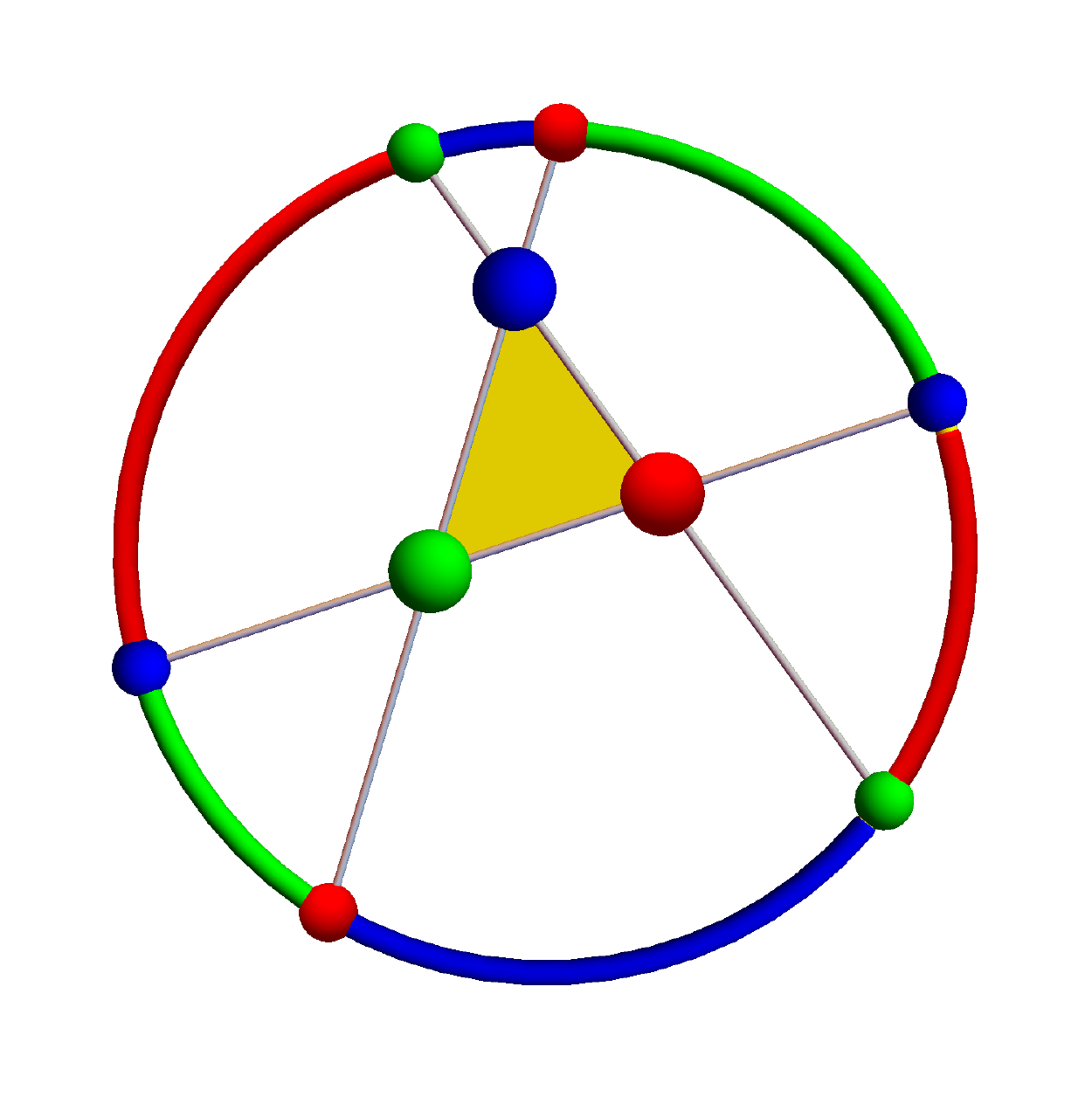}}
\label{1}
\caption{
The oldest Gauss-Bonnet result is the school geometry rule $\alpha+\beta+\gamma=\pi$
for flat triangles $M$. We can see it integral theoretically: we look at $M$ as
a 2-manifold with piecewise smooth boundary embedded in $N=E=\mathbb{R}^2$. The probability 
space is the unit circle in $E$ with normalized measure $dV$. In this
case, the critical points are all minima as then $i_f(x) = \{ y \in M | 1-\chi(S_r(y) \cap M)  \neq 0 \}$
is equal to $1$. The curvature is located on the three vertices and given by $\alpha/\pi$
and $\beta/\pi$ and $\gamma/\pi$, where each of the three probabilities are the combined length
of two arcs (normalized so that the entire circle $S=\Omega \subset E$ is the probability space). 
The same curvatures would result when embedding $M$ into a larger dimensional Euclidean space $E$.  
}
\end{figure}

\begin{figure}[!htpb]
\scalebox{0.6}{\includegraphics{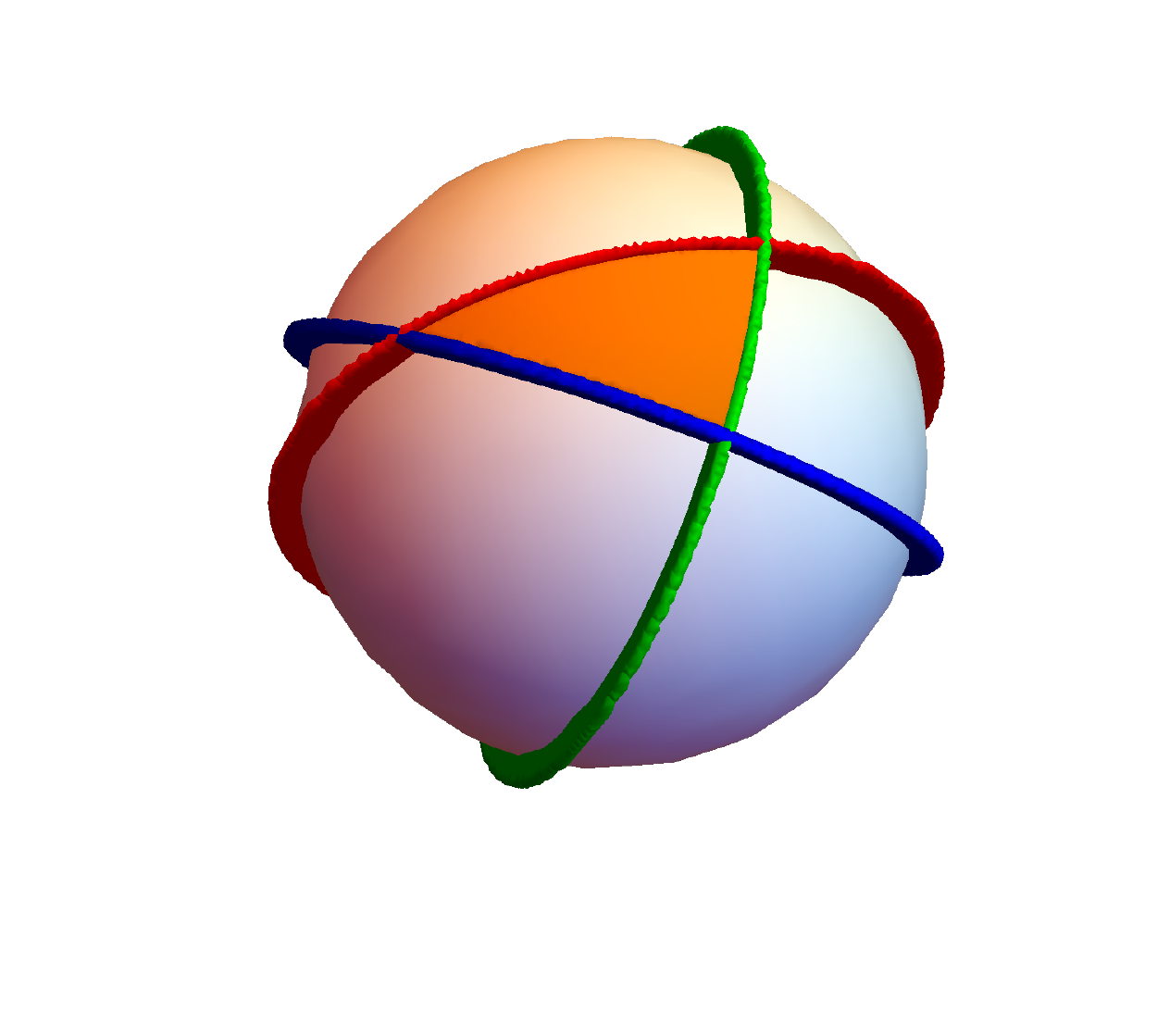}}
\label{2}
\caption{
A related example is when
$M$ is a spherical triangle embedded in $E=\mathbb{R}^3$, then there is some
curvature of sign $e=+1$ in the interior of $M$. It is the normalized area of the triangle itself
and its dual part. Then there is mass on the vertices. These are each given by 
two slices in the partition defined by the grand circles of the triangle. Translating
the probabilities gives the Harriot formula from the 15'th century. The sum of the 
angles in a spherical triangle is larger than $180$ degrees and the excess is given 
by the area of the spherical triangle. 
}
\end{figure}

\begin{figure}[!htpb]
\scalebox{0.6}{\includegraphics{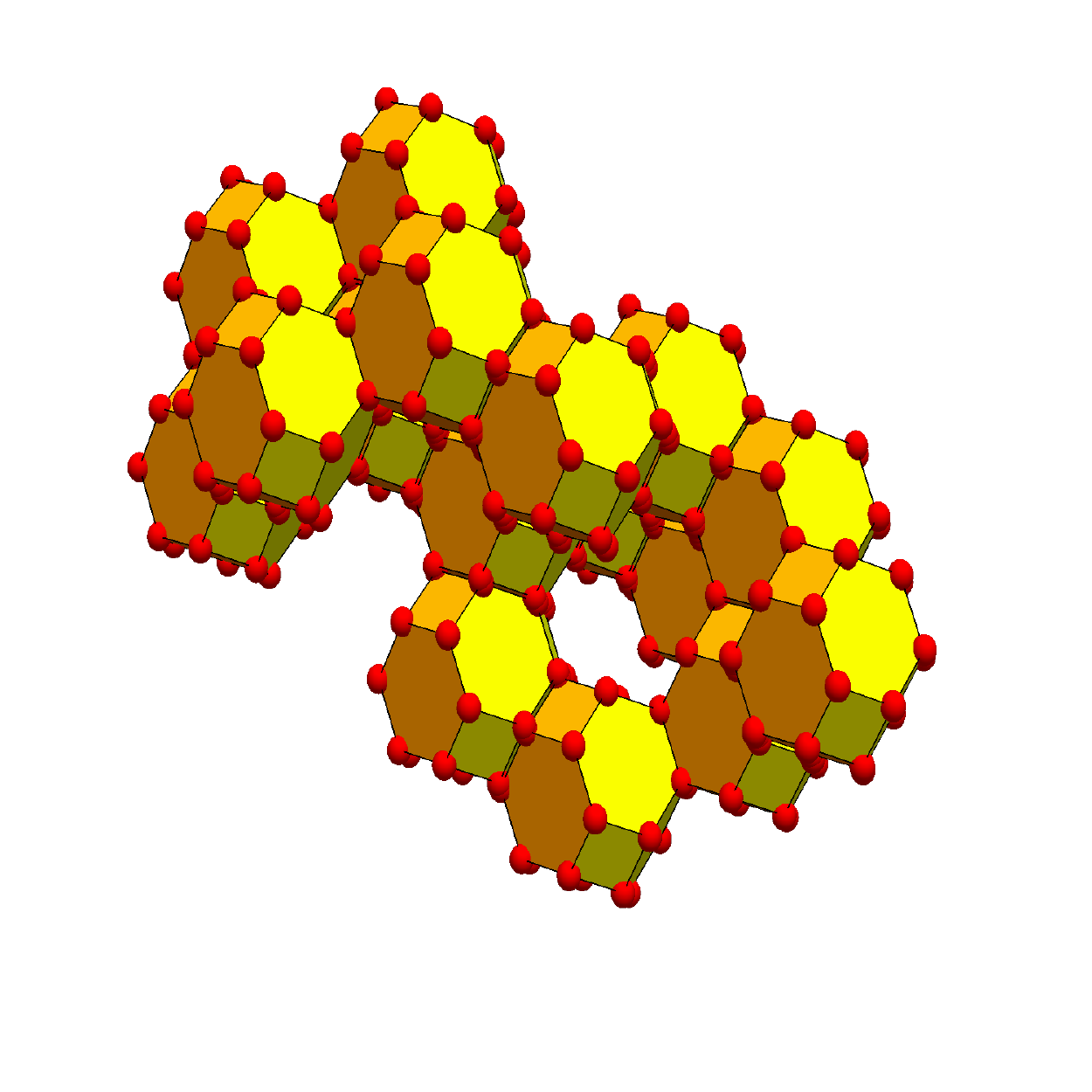}}
\label{2}
\caption{
The story told here shows that it is not possible to find a global smooth curvature function
$K(x) = \prod_k K_k(x)$ in general; we have therefore to cut the manifold up into smaller polyhedra
(for example simplices after a triangulation). In each piece $U$ we can find a curvature function $K_U$ and some 
discrete mass on the boundary. 
When looking at index expectation on such a manifold $U$ with piecewise-linear 
boundary, there is a curvature $K_U(x)$ in the interior and measure
$\kappa_U$ supported on the vertices of $U$. When gluing different such pieces $U$
together, then the curvature in the interior disappears. This picture leads to the
revival of the algebraic Hopf conjecture. 
}
\end{figure}

\bibliographystyle{plain}

\begin{thebibliography}{10}

\bibitem{AbreschMeyer}
U.~Abresch and W.~Meyer.
\newblock Injectivity radius estimates and sphere theorems.
\newblock In {\em Comparison Geometry}, volume~30 of {\em MSRI Publications}.
  Cambridge University Press, 1997.

\bibitem{AllendoerferWeil}
C.~Allendoerfer and A.~Weil.
\newblock The gauss-bonnet theorem for riemannian polyhedra.
\newblock {\em Transactions of the American Mathematical Society}, 53:101--129,
  1943.

\bibitem{Allendoerfer}
C.B. Allendoerfer.
\newblock The {E}uler number of a {R}iemann manifold.
\newblock {\em Amer. J. Math.}, 62:243, 1940.

\bibitem{Banchoff1967}
T.~Banchoff.
\newblock Critical points and curvature for embedded polyhedra.
\newblock {\em J. Differential Geometry}, 1:245--256, 1967.

\bibitem{BergerPanorama}
M.~Berger.
\newblock {\em A Panoramic View of Riemannian Geometry}.
\newblock Springer, 2003.

\bibitem{BishopGoldberg}
R.L. Bishop and S.I. Goldberg.
\newblock Some implications on the generalized {G}auss-{B}onnet theorem.
\newblock {\em Transactions of the AMS}, 112:508--535, 1964.

\bibitem{Chern44}
S.-S. Chern.
\newblock A simple intrinsic proof of the {Gauss-Bonnet} formula for closed
  {Riemannian} manifolds.
\newblock {\em Annals of Mathematics}, 45, 1944.

\bibitem{Chern1955}
S.-S. Chern.
\newblock On curvature and characteristic classes of a {R}iemann manifold.
\newblock {\em Abh. Math. Sem. Univ. Hamburg}, 20:117--126, 1955.

\bibitem{Chern1966}
S.-S. Chern.
\newblock The geometry of $g$-structures.
\newblock {\em Bull. Amer. Math. Soc.}, 72:167--219, 1966.

\bibitem{GromollKlingenbergMeyer}
W.~Klingenberg D.~Gromoll and W.~Meyer.
\newblock {\em Riemannsche Geometrie im Grossen}, volume~55 of {\em Lecture
  Notes in Mathematics}.
\newblock Springer Verlag, 1968.

\bibitem{EscherZiller}
C.~Escher and W.~Ziller.
\newblock Topology of non-negatively curved manifolds.
\newblock {\em Annals of Global Analysis and Geometry}, 46:23--55, 2014.

\bibitem{Fenchel}
W.~Fenchel.
\newblock On total curvatures for riemannianm manifolds (i).
\newblock {\em J. London Math. Soc}, 15:15, 1940.

\bibitem{Geroch}
R.~Geroch.
\newblock Positive sectional curvatures does not imply positive {Gauss-Bonnet}
  integrand.
\newblock {\em Proceedings of the AMS}, 54, 1976.

\bibitem{GrovePetersen}
K.~Grove and P.~Petersen.
\newblock {\em Comparison Geometry}, volume~30 of {\em MSRI Publications}.
\newblock Cambridge University Press, 1997.

\bibitem{HopfCurvaturaIntegra}
H.~Hopf.
\newblock \"{U}ber die {C}urvatura integra geschlossener {H}yperfl\"{a}chen.
\newblock {\em Math. Ann.}, 95(1):340--367, 1926.

\bibitem{Hopf1932}
H.~Hopf.
\newblock Differentialgeometrie und {T}opologische {G}estalt.
\newblock {\em Jahresbericht der Deutschen Mathematiker-Vereinigung},
  41:209--228, 1932.

\bibitem{Hopf1953}
H.~Hopf.
\newblock Sulla geometria riemanniana globale della superficie.
\newblock {\em Rendiconti del Seminario matematico e fisico di Milano}, pages
  48--63, 1953.

\bibitem{Klembeck}
P.F. Klembeck.
\newblock On {G}eroch's counterexample to the algebraic hopf conjecture.
\newblock {\em Proc. of the AMS}, 59, 1976.

\bibitem{DiscreteHopf}
O.~Knill.
\newblock Integral geometric {H}opf conjectures.
\newblock {\\}https://arxiv.org/abs/2001.01398, 2020.

\bibitem{NicoalescuMorse}
L.~Nicolaescu.
\newblock {\em An invitation to Morse Theory}.
\newblock Springer, 2018.

\bibitem{Pugh1968}
C.~Pugh.
\newblock A generalized {P}oincar\'e index formula.
\newblock {\em Topology}, 7:217--226, 1968.

\bibitem{Rauch51}
H.~E. Rauch.
\newblock A contribution to differential geometry in the large.
\newblock {\em Ann. of Math. (2)}, 54:38--55, 1951.

\bibitem{Weinstein70}
A.~Weinstein.
\newblock Positively curved n-manifolds in {$R^{n+2}$}.
\newblock {\em J. Differential geometry}, 4:1--4, 1970.

\bibitem{Weinstein71}
A.~Weinstein.
\newblock Remarks on curvature and the {E}uler integrand.
\newblock {\em J. Differential Geometry}, 6:259--262, 1971.

\bibitem{YauSeminar1982}
S.T. Yau.
\newblock Problem section.
\newblock In {\em Seminar on Differential Geometry}, volume 102 of {\em Annals
  of Mathematics Studies}. Princeton University Press, 1982.

\bibitem{Ziller}
W.~Ziller.
\newblock Examples of {R}iemannian manifolds with non-negative sectional
  curvature.
\newblock In {\em Surveys in differential geometry. {V}ol. {XI}}, volume~11 of
  {\em Surv. Differ. Geom.}, pages 63--102. Int. Press, Somerville, MA, 2007.

\bibitem{Ziller2}
W.~Ziller.
\newblock Riemannian manifolds with positive sectional curvature.
\newblock In {\em Geometry of Manifolds with Non-negative Sectional Curvature}.
  Springer, 2014.
\newblock Lecture given in Guanajuato of 2010.

\end{thebibliography}

\end{document}